\newtheorem{theorem}{Theorem}
\theoremstyle{plain}
\newtheorem{example}{Example}
\newtheorem{lemma}{Lemma}
\begin{document}
\title[Inverse nodal problem]{Reconstruction of the Dirac-Type
Integro-Differential Operator from Nodal Data}
\author{Baki Keskin}
\author{H. Dilara Tel}
\curraddr{Department of Mathematics, Faculty of Science, Cumhuriyet
University 58140 \\
Sivas, TURKEY}
\email{bkeskin@cumhuriyet.edu.tr}
\curraddr{Department of Mathematics, Faculty of Science, Cumhuriyet
University 58140 \\
Sivas, TURKEY}
\email{dilara\_5820@hotmail.com}
\subjclass[2000]{34A55, 34L05, 34K29, 34K10, 47G20}
\keywords{Dirac operator, integral-differential operators, inverse nodal
problem, uniqueness theorem, spectral problems.}

\begin{abstract}
The inverse nodal problem for Dirac type integro-differential operator with
the spectral parameter in the boundary conditions is studied. We prove that
dense subset of the nodal points determines\ the coefficients of
differential part of operator and gives partial information for integral
part of it.
\end{abstract}

\maketitle

\section{\textbf{Introduction}}

We consider the following boundary value problem $L$ generated by the
Dirac-type integro-differential system with the spectral parameter $\lambda $
in the boundary conditions:

\begin{equation}
\ell \left[ Y(x)\right] :=BY^{\prime }(x)+\Omega
(x)Y(x)+\int\limits_{0}^{x}\chi (x,t)Y(t)dt=\lambda Y(x),\text{ \ }x\in
(0,\pi ),
\end{equation}%
subject to the boundary conditions

\begin{eqnarray}
(\lambda \cos \theta +b_{1})y_{1}(0)+(\lambda \sin \theta +b_{2})y_{2}(0)
&=&0\medskip \\
(\lambda \cos \beta +d_{1})y_{1}(\pi )+(\lambda \sin \beta +d_{2})y_{2}(\pi
) &=&0
\end{eqnarray}%
where $\theta $ , $\beta ,$ $b_{1},b_{2},d_{1}$ and $d_{2}$ are real
constants and $\lambda $ is the spectral parameter, $B=\left( 
\begin{array}{cc}
0 & 1 \\ 
-1 & 0%
\end{array}%
\right) ,$ \ $\Omega (x)=\left( 
\begin{array}{cc}
V(x)+m & 0 \\ 
0 & V(x)-m%
\end{array}%
\right) ,$ $\chi (x,t)=\left( 
\begin{array}{cc}
\chi _{11}(x,t) & \chi _{12}(x,t) \\ 
\chi _{21}(x,t) & \chi _{22}(x,t)%
\end{array}%
\right) ,$ \ $Y(x)=\left( 
\begin{array}{c}
y_{1}(x) \\ 
y_{2}(x)%
\end{array}%
\right) ,$ $\Omega (x)$ and $\chi \left( x,t\right) $ are real-valued
functions in the class of $W_{2}^{1}(0,\pi )$, where $m$ is a real constant.
Throughout this paper, we denote $p(x)=V(x)+m,$ $r(x)=V(x)-m$ and assume $%
\int_{0}^{\pi }(p(t)+r(t))dt=0$.

Inverse nodal problems were first proposed and solved for Sturm--Liouville
operator by McLaughlin in 1988 \cite{mc1} In this study, it has been shown
that a dense subset of zeros of eigenfunctions, called nodal points,
uniquelly determines the potential of the Sturm Liouville operator..In 1989,
Hald and McLaughlin gave some numerical schemes for reconstructing potential
from nodal points for more general boundary conditions. \cite{H}. In 1997
Yang gave an algorithm to determine the coefficients of operator for the
inverse nodal Sturm-Liouville problem \cite{yang}. Inverse nodal problems
have been addressed by various researchers in several papers for different
operators (\cite{Br2}, \cite{Buterin}, \cite{buterin2}, \cite{ch}, \cite{Law}%
, \cite{Ozkan}, \cite{Yur}, \cite{Yang3} and \cite{Yang4}). The inverse
nodal problems for Dirac operators with various boundary conditions have
been studied and shown that the dense subsets of nodal points which are the
first components of the eigenfunctions determines the coefficients of
discussed operator in \cite{Guo}, \cite{Yang2} and \cite{Yang5}.

In recent years, perturbation of a differential operator by a Volterra type
integral operator, namely the integro-differential operator have begun to
take significant place in the literature.\cite{But}, \cite{But 2}, \cite{G}, 
\cite{Kur2} and \cite{B}). Integro-differential operators are nonlocal, and
therefore they are more difficult for investigation, than local ones. New
methods for solution of these problems are being developed. For
Sturm-Liouville type integro-differential operators, there exist some
studies about inverse problems but there is very little study for Dirac type
integro-differential operators. The inverse nodal problem for Dirac type
integro-differential operators was first studied by \cite{Kskn}. In their
study, it is shown that the coefficients of the differential part of the
operator can be determined by using nodal points and nodal points also gives
the partial information about integral part. In our study, we deal with an
inverse nodal problem of reconstructing the Dirac type integro-differential
operators with the spectral parameter in the boundary conditions. Firstly,
we have obtained a new approach for asymptotic expressions of the integral
equations of the solutions of such discussed problem. Secondly, more
accurate estimates of eigenvalues and nodal points have been calculated with
the help of these asymptotics. Lastly, We have proved that the operator can
be reconstructed by nodal points.

\section{Main Results}

Let $\varphi (x,\lambda )=\left( 
\begin{array}{c}
\varphi _{1}(x,\lambda ) \\ 
\varphi _{2}(x,\lambda )%
\end{array}%
\right) $ be the solution of (1) under the the initial condition $\varphi
(0,\lambda )=\left( 
\begin{array}{c}
\lambda \sin \theta +b_{2} \\ 
\lambda \cos \theta -b_{1}%
\end{array}%
\right) $. It is easy to see that this solution is an entire function of $%
\lambda $ for each fixed $x$ and $t$. One can easily verify that the
function $\varphi (x,\lambda )$ satisfies%
\begin{equation}
\begin{array}{l}
\varphi _{1}(x,\lambda )=\lambda \sin \left( \theta +\lambda x\right)
+b_{1}\sin \lambda x+b_{2}\cos \lambda x \\ 
+\dint\limits_{0}^{x}p(t)\varphi _{1}(t)\sin \lambda
(x-t)dt+\dint\limits_{0}^{x}r(t)\varphi _{2}(t)\cos \lambda (t-x)dt\medskip
\\ 
+\dint\limits_{0}^{x}\dint\limits_{0}^{t}\left\{ \chi _{11}(t,\eta )\varphi
_{1}(\eta )+\chi _{12}(t,\eta )\varphi _{2}(\eta )\right\} \sin \lambda
(x-t)d\eta dt\medskip \\ 
+\dint\limits_{0}^{x}\dint\limits_{0}^{t}\left\{ \chi _{21}(t,\eta )\varphi
_{1}(\eta )+\chi _{22}(t,\eta )\varphi _{2}(\eta )\right\} \cos \lambda
(x-t)d\eta dt,%
\end{array}%
\end{equation}%
\begin{equation}
\begin{array}{l}
\varphi _{2}(x,\lambda )=-\lambda \cos \left( \theta +\lambda x\right)
-b_{1}\cos \lambda x+b_{2}\sin \lambda x\medskip \\ 
-\dint\limits_{0}^{x}p(t)\varphi _{1}(t)\cos \lambda
(x-t)dt+\dint\limits_{0}^{x}r(t)\varphi _{2}(t)\sin \lambda (t-x)dt\medskip
\\ 
-\dint\limits_{0}^{x}\dint\limits_{0}^{t}\left\{ \chi _{11}(t,\eta )\varphi
_{1}(\eta )+\chi _{12}(t,\eta )\varphi _{2}(\eta )\right\} \cos \lambda
(x-t)d\eta dt\medskip \\ 
+\dint\limits_{0}^{x}\dint\limits_{0}^{t}\left\{ \chi _{21}(t,\eta )\varphi
_{1}(\eta )+\chi _{22}(t,\eta )\varphi _{2}(\eta )\right\} \sin \lambda
(x-t)d\eta dt.%
\end{array}%
\end{equation}

\begin{theorem}
The functions $\varphi _{1}(x,\lambda )$ and $\varphi _{2}(x,\lambda )$ have
the following asymptotic expansions:%
\begin{eqnarray}
\varphi _{1}(x,\lambda ) &=&\lambda \sin \left[ \theta +\lambda x-\nu (x)%
\right] +b_{1}\sin \left[ \lambda x-\nu (x)\right]  \notag \\
&&+b_{2}\cos \left[ \lambda x-\nu (x)\right] +m\cos \theta \sin \left[
\lambda x-\nu (x)\right]  \notag \\
&&+\frac{b_{1}m}{\lambda }\sin \left[ \lambda x-\nu (x)\right] -\frac{m^{2}x%
}{2}\cos \left[ \theta +\lambda x-\nu (x)\right] \medskip  \notag \\
&&-\frac{b_{1}m^{2}x}{2\lambda }\cos \left[ \lambda x-\nu (x)\right] +\frac{%
b_{2}m^{2}x}{2\lambda }\sin \left[ \lambda x-\nu (x)\right] \medskip \\
&&-\frac{K\left( x\right) }{2}\sin \left[ \theta +\lambda x-\nu (x)\right] -%
\frac{b_{1}}{2\lambda }K\left( x\right) \sin \left[ \lambda x-\nu (x)\right]
\medskip  \notag \\
&&-\frac{b_{2}}{2\lambda }K\left( x\right) \cos \left[ \lambda x-\nu (x)%
\right] +\frac{L\left( x\right) }{2}\cos \left[ \theta +\lambda x-\nu (x)%
\right] \medskip  \notag \\
&&+\frac{b_{1}}{2\lambda }L\left( x\right) \cos \left[ \lambda x-\nu (x)%
\right] -\frac{b_{2}}{2\lambda }L\left( x\right) \sin \left[ \lambda x-\nu
(x)\right] +o\left( \frac{e^{\left\vert \tau \right\vert x}}{\lambda }%
\right) ,  \notag
\end{eqnarray}%
\begin{eqnarray}
\varphi _{2}(x,\lambda ) &=&-\lambda \cos \left[ \theta +\lambda x-\nu (x)%
\right] -b_{1}\cos \left[ \lambda x-\nu (x)\right]  \notag \\
&&+b_{2}\sin \left[ \lambda x-\nu (x)\right] -m\sin \theta \sin \left[
\lambda x-\nu (x)\right]  \notag \\
&&-\frac{b_{2}m}{\lambda }\sin \left[ \lambda x-\nu (x)\right] -\frac{m^{2}x%
}{2}\sin \left[ \theta +\lambda x-\nu (x)\right] \medskip  \notag \\
&&-\frac{b_{1}m^{2}x}{2\lambda }\sin \left[ \lambda x-\nu (x)\right] -\frac{%
b_{2}m^{2}x}{2\lambda }\cos \left[ \lambda x-\nu (x)\right] \medskip \\
&&+\frac{K\left( x\right) }{2}\cos \left[ \theta +\lambda x-\nu (x)\right] +%
\frac{b_{1}}{2\lambda }K\left( x\right) \cos \left[ \lambda x-\nu (x)\right]
\medskip  \notag \\
&&-\frac{b_{2}}{2\lambda }K\left( x\right) \sin \left[ \lambda x-\nu (x)%
\right] +\frac{L\left( x\right) }{2}\sin \left[ \theta +\lambda x-\nu (x)%
\right] \medskip  \notag \\
&&+\frac{b_{1}}{2\lambda }L\left( x\right) \sin \left[ \lambda x-\nu (x)%
\right] +\frac{b_{2}}{2\lambda }L\left( x\right) \cos \left[ \lambda x-\nu
(x)\right] +o\left( \frac{e^{\left\vert \tau \right\vert x}}{\lambda }%
\right) .  \notag
\end{eqnarray}%
for sufficiently large $\left\vert \lambda \right\vert ,$ uniformly in $x,$
where, $\nu (x)=\dfrac{1}{2}\int_{0}^{x}(p(t)+r(t))dt=\int_{0}^{x}V(t)dt,$ $%
K(x)=\int_{0}^{x}(\chi _{11}(t,t)+\chi _{22}(t,t))dt,$ $L(x)=\int_{0}^{x}(%
\chi _{12}(t,t)-\chi _{21}(t,t))dt$ and $\tau =\func{Im}\lambda .$
\end{theorem}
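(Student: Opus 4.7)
My plan is to treat (4)--(5) as a coupled system of Volterra integral equations and extract the asymptotics by a two-stage successive-approximation scheme, tracking terms up to order $1/\lambda$ and relegating everything smaller to the error $o(e^{|\tau|x}/\lambda)$.

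First, I would establish a rough a priori bound by iterating (4)--(5) directly. Setting $M(x,\lambda)=\max_i\sup_{s\le x}|\varphi_i(s,\lambda)|e^{-|\tau|s}$ and using $|\sin\lambda(x-t)|,|\cos\lambda(x-t)|\le e^{|\tau|(x-t)}$, a standard Gronwall/geometric-series argument gives $|\varphi_i(x,\lambda)|=O(|\lambda|e^{|\tau|x})$ uniformly on $[0,\pi]$ for $|\lambda|$ large. This is the rung needed to begin the refinement.

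Second, I would substitute the free terms $\lambda\sin(\theta+\lambda t)+b_1\sin\lambda t+b_2\cos\lambda t$ and $-\lambda\cos(\theta+\lambda t)-b_1\cos\lambda t+b_2\sin\lambda t$ into the single integrals of (4)--(5), expand the products via the identities $\sin A\sin B=\tfrac12[\cos(A-B)-\cos(A+B)]$, $\sin A\cos B=\tfrac12[\sin(A+B)+\sin(A-B)]$, and split each resulting integrand into a $t$-independent ``slow'' piece and a rapidly oscillating ``fast'' piece. Since $p,r\in W_2^1(0,\pi)$, one integration by parts kills the fast pieces down to $o(|\lambda|e^{|\tau|x})$ (or $o(e^{|\tau|x})$ at the next order). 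The slow pieces yield precisely $-\lambda\nu(x)\cos(\theta+\lambda x)$ in $\varphi_1$ and $-\lambda\nu(x)\sin(\theta+\lambda x)$ in $\varphi_2$, which are exactly the first-order terms of the Taylor expansions of $\lambda\sin[\theta+\lambda x-\nu(x)]$ and $-\lambda\cos[\theta+\lambda x-\nu(x)]$ in $\nu(x)$. To capture the full phase shift rather than only its first Taylor coefficient, I would introduce the modified ansatz $\varphi_1\sim\lambda\sin[\theta+\lambda x-\nu(x)]+\cdots$ and verify by direct insertion that the induced error in the integral equation is of the required order; alternatively, define $\widetilde\varphi(x,\lambda)=R(\nu(x))^{-1}\varphi(x,\lambda)$ with $R$ the planar rotation, for which the $V(x)$-dependence disappears from the leading part of the integral equation and the standard constant-coefficient asymptotic applies verbatim.

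The third and hardest step is extracting $K(x)$ and $L(x)$ from the double integrals. Here I would apply Fubini to rewrite $\int_0^x\!\int_0^t\chi_{ij}(t,\eta)\varphi_j(\eta)\kappa(\lambda(x-t))\,d\eta\,dt=\int_0^x\varphi_j(\eta)\bigl(\int_\eta^x\chi_{ij}(t,\eta)\kappa(\lambda(x-t))dt\bigr)d\eta$, then integrate by parts in $t$ in the inner integral to gain a factor $1/\lambda$. The boundary term at $t=\eta$ contributes $\pm\chi_{ij}(\eta,\eta)$, and when paired with the leading $\pm\lambda\sin/\cos(\theta+\lambda\eta)$ from $\varphi_j(\eta)$ produces, after a second application of product-to-sum and integration by parts, exactly the combinations $\chi_{11}(t,t)+\chi_{22}(t,t)$ and $\chi_{12}(t,t)-\chi_{21}(t,t)$ sitting inside the sine/cosine of $\theta+\lambda x-\nu(x)$; the boundary term at $t=x$ and the remaining double integrals contribute to $o(e^{|\tau|x}/\lambda)$ because $\chi_{ij}\in W_2^1$. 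A final substitution of the refined ansatz back into (4)--(5) generates the $1/\lambda$-order $b_1,b_2,m$-corrections, and a Gronwall estimate on the integral equation satisfied by the remainder closes the argument. The principal obstacle is the bookkeeping of this third step: isolating the specific pairings $\chi_{11}+\chi_{22}$ and $\chi_{12}-\chi_{21}$ without spurious residual terms demands the trigonometric cross-terms arising from the two components of the system to cancel exactly, and verifying this cancellation — together with the correct signs in front of $K(x)/2$ and $L(x)/2$ — is where the computation is most delicate.
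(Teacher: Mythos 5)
Your overall strategy --- successive approximation on the Volterra system (4)--(5), product-to-sum identities, and integration by parts exploiting the $W_2^1$ smoothness of $\Omega $ and $\chi $ --- is essentially the paper's method, and your treatment of the double integrals (Fubini, integration by parts in $t$, the boundary term at $t=\eta $ producing $\chi _{ij}(\eta ,\eta )$, and the slow parts of the resulting products assembling into $K(x)$ and $L(x)$) reproduces the paper's computation with the correct pairings $\chi _{11}+\chi _{22}$ and $\chi _{12}-\chi _{21}$. Where you genuinely differ is in how the full phase shift $\nu (x)$ is captured: the paper computes every iterate $\varphi _{i,n}$ explicitly, observes the pattern $(-1)^{n}\nu ^{2n}(x)/(2n)!$, $(-1)^{n+1}\nu ^{2n+1}(x)/(2n+1)!$, and resums the Taylor series of $\sin $ and $\cos $, whereas you propose either a verified ansatz or the gauge rotation $\widetilde{\varphi }=R(\nu (x))^{-1}\varphi $ together with a Gronwall estimate on the remainder. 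The rotation argument is legitimate (since $R'=\nu 'BR$ and $B^{2}=-I$, the scalar part $V(x)I$ of $\Omega $ is gauged away) and is arguably cleaner than the paper's pattern-spotting, which is asserted rather than proved by induction.

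There is, however, one step that fails as stated. You claim that after product-to-sum the rapidly oscillating pieces of the single integrals are killed by one integration by parts. In the first component the two fast pieces combine with coefficient $p(t)-r(t)=2m$, a constant, so the oscillatory integral is evaluated exactly:
\begin{equation*}
\frac{\lambda }{2}\int_{0}^{x}\bigl(p(t)-r(t)\bigr)\cos \bigl(\theta +2\lambda t-\lambda x\bigr)\,dt=m\cos \theta \,\sin \lambda x,
\end{equation*}
which is $O(e^{\left\vert \tau \right\vert x})$, not $o(e^{\left\vert \tau \right\vert x})$, and is precisely the term $m\cos \theta \sin [\lambda x-\nu (x)]$ of (6) (similarly $-m\sin \theta \sin [\lambda x-\nu (x)]$ in (7)). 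Its image under the next iteration produces the $-\tfrac{m^{2}x}{2}\cos [\theta +\lambda x-\nu (x)]$ and $-\tfrac{m^{2}x}{2}\sin [\theta +\lambda x-\nu (x)]$ terms, again of order $\lambda ^{0}$. Discarding the fast pieces therefore loses a substantial part of the asserted expansion; and although you do foresee ``$b_{1},b_{2},m$-corrections'' from a final re-substitution, you place them at order $1/\lambda $, whereas the $m\cos \theta $, $m\sin \theta $ and $m^{2}x/2$ terms sit at order $\lambda ^{0}$. The same caveat applies to the rotation route: after gauging away $V(x)I$ the residual potential $mR(\nu )^{-1}\mathrm{diag}(1,-1)R(\nu )$ is bounded but nonzero, so the ``constant-coefficient asymptotic'' gives only the leading term and two further iterations are still required to generate the $m$- and $m^{2}$-terms. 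Once the oscillatory pieces are kept and integrated exactly rather than estimated away, your scheme closes correctly.
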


\begin{proof}
To apply the method of successive approximations to (4) and (5), put%
\begin{equation*}
\begin{array}{l}
\varphi _{1,0}(x,\lambda )=\lambda \sin \left( \lambda x+\theta \right)
+b_{1}\sin \lambda x+b_{2}\cos \lambda x,\medskip \\ 
\varphi _{1,n+1}(x,\lambda )=\dint\limits_{0}^{x}p(t)\varphi _{1,n}(t)\sin
\lambda (x-t)dt+\dint\limits_{0}^{x}r(t)\varphi _{2,n}(t)\cos \lambda
(t-x)dt\medskip \\ 
+\dint\limits_{0}^{x}\dint\limits_{0}^{t}\left\{ \chi _{11}(t,\eta )\varphi
_{1,n}(\eta )+\chi _{12}(t,\eta )\varphi _{2,n}(\eta )\right\} \sin \lambda
(x-t)d\eta dt\medskip \\ 
+\dint\limits_{0}^{x}\dint\limits_{0}^{t}\left\{ \chi _{21}(t,\eta )\varphi
_{1,n}(\eta )+\chi _{22}(t,\eta )\varphi _{2,n}(\eta )\right\} \cos \lambda
(x-t)d\eta dt,%
\end{array}%
\end{equation*}%
and%
\begin{equation*}
\begin{array}{l}
\varphi _{2,0}(x,\lambda )=-\lambda \cos \left( \theta +\lambda x\right)
-b_{1}\cos \lambda x+b_{2}\sin \lambda x,\medskip \\ 
\varphi _{2,n+1}(x,\lambda )=-\dint\limits_{0}^{x}p(t)\varphi _{1,n}(t)\cos
\lambda (x-t)dt+\dint\limits_{0}^{x}r(t)\varphi _{2,n}(t)\sin \lambda
(t-x)dt\medskip \\ 
-\dint\limits_{0}^{x}\dint\limits_{0}^{t}\left\{ \chi _{11}(t,\eta )\varphi
_{1,n}(\eta )+\chi _{12}(t,\eta )\varphi _{2,n}(\eta )\right\} \cos \lambda
(x-t)d\eta dt\medskip \\ 
+\dint\limits_{0}^{x}\dint\limits_{0}^{t}\left\{ \chi _{21}(t,\eta )\varphi
_{1,n}(\eta )+\chi _{22}(t,\eta )\varphi _{2,n}(\eta )\right\} \sin \lambda
(x-t)d\eta dt.%
\end{array}%
\end{equation*}%
Then we have%
\begin{eqnarray*}
\varphi _{1,1}(x,\lambda ) &=&-\nu (x)\lambda \cos \left( \theta +\lambda
x\right) -\nu (x)b_{1}\cos \lambda x+\nu (x)b_{2}\sin \lambda x\medskip \\
&&+m\cos \theta \sin \lambda x+\frac{b_{1}m}{\lambda }\sin \lambda x-\frac{%
K\left( x\right) }{2}\sin \left( \theta +\lambda x\right) \medskip \\
&&-\frac{b_{1}}{2\lambda }K\left( x\right) \sin \lambda x-\frac{b_{2}}{%
2\lambda }K\left( x\right) \cos \lambda x+\frac{L\left( x\right) }{2}\cos
\left( \theta +\lambda x\right) \medskip \\
&&+\frac{b_{1}}{2\lambda }L\left( x\right) \cos \lambda x-\frac{b_{2}}{%
2\lambda }L\left( x\right) \sin \lambda x+o\left( \frac{e^{\left\vert \tau
\right\vert x}}{\lambda }\right) ,
\end{eqnarray*}%
\begin{eqnarray*}
\varphi _{2,1}(x,\lambda ) &=&-\nu (x)\lambda \sin \left( \theta +\lambda
x\right) -\nu (x)b_{1}\sin \lambda x-\nu (x)b_{2}\cos \lambda x\medskip \\
&&-m\sin \theta \sin \lambda x-\frac{b_{2}m}{\lambda }\sin \lambda x+K\left(
x\right) \cos \left( \theta +\lambda x\right) \medskip \\
&&+\frac{b_{1}}{2\lambda }K\left( x\right) \cos \lambda x-\frac{b_{2}}{%
2\lambda }K\left( x\right) \sin \lambda x+\frac{L\left( x\right) }{2}\sin
\left( \theta +\lambda x\right) \medskip \\
&&+\frac{b_{1}}{2\lambda }L\left( x\right) \sin \lambda x+\frac{b_{2}}{%
2\lambda }L\left( x\right) \cos \lambda x+o\left( \frac{e^{\left\vert \tau
\right\vert x}}{\lambda }\right) ,
\end{eqnarray*}%
and for $n\epsilon 
\mathbb{Z}
^{+}$%
\begin{eqnarray*}
\varphi _{1,2n+1}(x,\lambda ) &=&\left( -1\right) ^{n+1}\frac{\nu
^{2n+1}\left( x\right) }{\left( 2n+1\right) !}\lambda \cos \left( \theta
+\lambda x\right) +\left( -1\right) ^{n+1}b_{1}\frac{\nu ^{2n+1}\left(
x\right) }{\left( 2n+1\right) !}\cos \lambda x\medskip \\
&&+\left( -1\right) ^{n}b_{2}\frac{\nu ^{2n+1}\left( x\right) }{\left(
2n+1\right) !}\sin \lambda x+\left( -1\right) ^{n}m\frac{\nu ^{2n}\left(
x\right) }{\left( 2n\right) !}\cos \theta \sin \lambda x\medskip \\
&&+\left( -1\right) ^{n}\frac{b_{1}m}{\lambda }\frac{\nu ^{2n}\left(
x\right) }{\left( 2n\right) !}\sin \lambda x+\left( -1\right) ^{n}\frac{%
m^{2}x}{2}\frac{\nu ^{2n-1}\left( x\right) }{\left( 2n-1\right) !}\sin
\left( \theta +\lambda x\right) \medskip \\
&&+\left( -1\right) ^{n}\frac{b_{1}m^{2}x}{2\lambda }\frac{\nu ^{2n-1}\left(
x\right) }{\left( 2n-1\right) !}\sin \lambda x+\left( -1\right) ^{n}\frac{%
b_{2}m^{2}x}{2\lambda }\frac{\nu ^{2n-1}\left( x\right) }{\left( 2n-1\right)
!}\cos \lambda x\medskip \\
&&+\left( -1\right) ^{n+1}\frac{\nu ^{2n}\left( x\right) }{\left( 2n\right) !%
}\frac{K\left( x\right) }{2}\sin \left( \theta +\lambda x\right) +\left(
-1\right) ^{n+1}\frac{b_{1}}{2\lambda }\frac{\nu ^{2n}\left( x\right) }{%
\left( 2n\right) !}K\left( x\right) \sin \lambda x\medskip \\
&&+\left( -1\right) ^{n+1}\frac{b_{2}}{2\lambda }\frac{\nu ^{2n}\left(
x\right) }{\left( 2n\right) !}K\left( x\right) \cos \lambda x+\left(
-1\right) ^{n}\frac{\nu ^{2n}\left( x\right) }{\left( 2n\right) !}\frac{%
L\left( x\right) }{2}\cos \left( \theta +\lambda x\right) \medskip \\
&&+\left( -1\right) ^{n}\frac{b_{1}}{2\lambda }\frac{\nu ^{2n}\left(
x\right) }{\left( 2n\right) !}L\left( x\right) \cos \lambda x+\left(
-1\right) ^{n+1}\frac{b_{2}}{2\lambda }\frac{\nu ^{2n}\left( x\right) }{%
\left( 2n\right) !}L\left( x\right) \sin \lambda x\medskip \\
&&+o\left( \frac{e^{\left\vert \tau \right\vert x}}{\lambda }\right) ,
\end{eqnarray*}%
\begin{eqnarray*}
\varphi _{1,2n}(x,\lambda ) &=&\left( -1\right) ^{n}\frac{\nu ^{2n}\left(
x\right) }{\left( 2n\right) !}\sin \lambda x+\left( -1\right) ^{n}\frac{\nu
^{2n}\left( x\right) }{\left( 2n\right) !}\cos \lambda x\medskip \\
&&+\left( -1\right) ^{n}m\frac{\nu ^{2n-1}\left( x\right) }{\left(
2n-1\right) !}\cos \theta \cos \lambda x+\left( -1\right) ^{n}\frac{b_{1}m}{%
\lambda }\frac{\nu ^{2n-1}\left( x\right) }{\left( 2n-1\right) !}\cos
\lambda x\medskip \\
&&+\left( -1\right) ^{n}\frac{m^{2}x}{2}\frac{\nu ^{2n-2}\left( x\right) }{%
\left( 2n-2\right) !}\cos \left( \theta +\lambda x\right) +\left( -1\right)
^{n}\frac{b_{1}m^{2}x}{2\lambda }\frac{\nu ^{2n-2}\left( x\right) }{\left(
2n-2\right) !}\cos \lambda x\medskip \\
&&+\left( -1\right) ^{n+1}\frac{b_{2}m^{2}x}{2\lambda }\frac{\nu
^{2n-2}\left( x\right) }{\left( 2n-2\right) !}\sin \lambda x+\frac{\left(
-1\right) ^{n+1}}{2}\frac{\nu ^{2n-1}\left( x\right) }{\left( 2n-1\right) !}%
K\left( x\right) \cos \left( \theta +\lambda x\right) \medskip \\
&&+\left( -1\right) ^{n+1}\frac{b_{1}}{2\lambda }\frac{\nu ^{2n-1}\left(
x\right) }{\left( 2n-1\right) !}K\left( x\right) \cos \lambda x+\left(
-1\right) ^{n}\frac{b_{2}}{2\lambda }\frac{\nu ^{2n-1}\left( x\right) }{%
\left( 2n-1\right) !}K\left( x\right) \sin \lambda x\medskip \\
&&+\frac{\left( -1\right) ^{n+1}}{2}\frac{\nu ^{2n-1}\left( x\right) }{%
\left( 2n-1\right) !}L\left( x\right) \sin \left( \theta +\lambda x\right)
+\left( -1\right) ^{n+1}\frac{b_{1}}{2\lambda }\frac{\nu ^{2n-1}\left(
x\right) }{\left( 2n-1\right) !}L\left( x\right) \sin \lambda x\medskip \\
&&+\left( -1\right) ^{n+1}\frac{b_{2}}{2\lambda }\frac{\nu ^{2n-1}\left(
x\right) }{\left( 2n-1\right) !}L\left( x\right) \cos \lambda x+o\left( 
\frac{e^{\left\vert \tau \right\vert x}}{\lambda }\right) ,\medskip
\end{eqnarray*}%
\begin{eqnarray*}
\varphi _{2,2n+1}(x,\lambda ) &=&\left( -1\right) ^{n+1}\frac{\nu
^{2n+1}\left( x\right) }{\left( 2n+1\right) !}\sin \lambda x+\left(
-1\right) ^{n+1}\frac{\nu ^{2n+1}\left( x\right) }{\left( 2n+1\right) !}\cos
\lambda x\medskip \\
&&+\left( -1\right) ^{n+1}m\frac{\nu ^{2n}\left( x\right) }{\left( 2n\right)
!}\sin \theta \sin \lambda x+\left( -1\right) ^{n+1}\frac{b_{2}m}{\lambda }%
\frac{\nu ^{2n}\left( x\right) }{\left( 2n\right) !}\sin \lambda x\medskip \\
&&+\left( -1\right) ^{n+1}\frac{m^{2}x}{2}\frac{\nu ^{2n-1}\left( x\right) }{%
\left( 2n-1\right) !}\cos \left( \theta +\lambda x\right) +\left( -1\right)
^{n+1}\frac{b_{1}m^{2}x}{2\lambda }\frac{\nu ^{2n-1}\left( x\right) }{\left(
2n-1\right) !}\cos \lambda x\medskip \\
&&+\left( -1\right) ^{n}\frac{b_{2}m^{2}x}{2\lambda }\frac{\nu ^{2n-1}\left(
x\right) }{\left( 2n-1\right) !}\sin \lambda x+\frac{\left( -1\right) ^{n}}{2%
}\frac{\nu ^{2n}\left( x\right) }{\left( 2n\right) !}K\left( x\right) \cos
\left( \theta +\lambda x\right) \medskip \\
&&+\left( -1\right) ^{n}\frac{b_{1}}{2\lambda }\frac{\nu ^{2n}\left(
x\right) }{\left( 2n\right) !}K\left( x\right) \cos \lambda x+\left(
-1\right) ^{n+1}\frac{b_{2}}{2\lambda }\frac{\nu ^{2n}\left( x\right) }{%
\left( 2n\right) !}K\left( x\right) \sin \lambda x\medskip \\
&&+\frac{\left( -1\right) ^{n}}{2}\frac{\nu ^{2n}\left( x\right) }{\left(
2n\right) !}L\left( x\right) \sin \left( \theta +\lambda x\right) +\left(
-1\right) ^{n}\frac{b_{1}}{2\lambda }\frac{\nu ^{2n}\left( x\right) }{\left(
2n\right) !}L\left( x\right) \sin \lambda x\medskip \\
&&+\left( -1\right) ^{n}\frac{b_{2}}{2\lambda }\frac{\nu ^{2n}\left(
x\right) }{\left( 2n\right) !}L\left( x\right) \cos \lambda x+o\left( \frac{%
e^{\left\vert \tau \right\vert x}}{\lambda }\right) ,
\end{eqnarray*}%
\begin{eqnarray*}
\varphi _{2,2n}(x,\lambda ) &=&\left( -1\right) ^{n+1}\frac{\nu ^{2n}\left(
x\right) }{\left( 2n\right) !}\lambda \cos \left( \theta +\lambda x\right)
+\left( -1\right) ^{n+1}b_{1}\frac{\nu ^{2n}\left( x\right) }{\left(
2n\right) !}\cos \lambda x\medskip \\
&&+\left( -1\right) ^{n}b_{2}\frac{\nu ^{2n}\left( x\right) }{\left(
2n\right) !}\sin \lambda x+\left( -1\right) ^{n+1}m\frac{\nu ^{2n-1}\left(
x\right) }{\left( 2n-1\right) !}\sin \theta \cos \lambda x\medskip \\
&&+\left( -1\right) ^{n+1}\frac{b_{2}m}{\lambda }\frac{\nu ^{2n-1}\left(
x\right) }{\left( 2n-1\right) !}\cos \lambda x+\left( -1\right) ^{n}\frac{%
m^{2}x}{2}\frac{\nu ^{2n-2}\left( x\right) }{\left( 2n-2\right) !}\sin
\left( \theta +\lambda x\right) \medskip \\
&&+\left( -1\right) ^{n}\frac{b_{1}m^{2}x}{2\lambda }\frac{\nu ^{2n-2}\left(
x\right) }{\left( 2n-2\right) !}\sin \lambda x+\left( -1\right) ^{n}\frac{%
b_{2}m^{2}x}{2\lambda }\frac{\nu ^{2n-2}\left( x\right) }{\left( 2n-2\right)
!}\cos \lambda x\medskip \\
&&+\left( -1\right) ^{n+1}\frac{\nu ^{2n-1}\left( x\right) }{\left(
2n-1\right) !}\frac{K\left( x\right) }{2}\sin \left( \theta +\lambda
x\right) +\left( -1\right) ^{n+1}\frac{b_{1}}{2\lambda }\frac{\nu
^{2n-1}\left( x\right) }{\left( 2n-1\right) !}K\left( x\right) \sin \lambda
x\medskip \\
&&+\left( -1\right) ^{n+1}\frac{b_{2}}{2\lambda }\frac{\nu ^{2n-1}\left(
x\right) }{\left( 2n-1\right) !}K\left( x\right) \cos \lambda x+\left(
-1\right) ^{n}\frac{\nu ^{2n-1}\left( x\right) }{\left( 2n-1\right) !}\frac{%
L\left( x\right) }{2}\cos \left( \theta +\lambda x\right) \medskip \\
&&+\left( -1\right) ^{n}\frac{b_{1}}{2\lambda }\frac{\nu ^{2n-1}\left(
x\right) }{\left( 2n-1\right) !}L\left( x\right) \cos \lambda x+\left(
-1\right) ^{n+1}\frac{b_{2}}{2\lambda }\frac{\nu ^{2n-1}\left( x\right) }{%
\left( 2n-1\right) !}L\left( x\right) \sin \lambda x\medskip \\
&&+o\left( \frac{e^{\left\vert \tau \right\vert x}}{\lambda }\right) ,
\end{eqnarray*}%
for sufficiently large $\left\vert \lambda \right\vert ,$ uniformly in $x.$
Hence, the proof \ the theorem1 is completed by successive approximations
method.
\end{proof}

Define the entire function $\Delta (\lambda )$ by 
\begin{equation}
\Delta (\lambda )=\varphi _{1}(\pi ,\lambda )\left( \lambda \cos \beta
+d_{1}\right) +\varphi _{2}(\pi ,\lambda )\left( \lambda \sin \beta
+d_{2}\right) ,
\end{equation}%
this function is called the characteristic function of the problem (1)-(3)
and \ the zeros of it, namely $\left\{ \lambda _{n}\right\} _{n\in 
\mathbb{Z}
},$\ coincide with the eigenvalues of the problem (1)-(3). From (6) and (7),
we get the characteristic function $\Delta (\lambda )$\ has the following
asymptotic relation for sufficiently large $\left\vert \lambda \right\vert ,$%
\begin{eqnarray*}
\Delta (\lambda ) &=&\lambda ^{2}\left\{ \sin \left[ \lambda \pi +\theta
-\beta \right] \right. \medskip \\
&&\left. +\dfrac{b_{1}}{\lambda }\sin \left[ \lambda \pi -\beta \right] +%
\dfrac{b_{2}}{\lambda }\cos \left[ \lambda \pi -\beta \right] \right.
\medskip \\
&&\left. +\dfrac{m}{\lambda }\sin \left[ \lambda \pi \right] \cos (\theta
+\beta )-\dfrac{m^{2}\pi b_{1}}{2\lambda }\cos \left[ \lambda \pi +\theta
-\beta \right] \right. \medskip \\
&&\left. -\dfrac{K(\pi )}{2\lambda }\sin \left[ \lambda \pi +\theta -\beta %
\right] +\dfrac{L(\pi )}{2\lambda }\cos \left[ \lambda \pi +\theta -\beta %
\right] \right. \medskip \\
&&\left. +\dfrac{d_{1}}{\lambda }\sin \left[ \lambda \pi +\theta \right] -%
\dfrac{d_{2}}{\lambda }\cos \left[ \lambda \pi +\theta \right] +o\left( 
\frac{e^{\left\vert \tau \right\vert \pi }}{\lambda }\right) \right\}
.\medskip \\
&&\medskip
\end{eqnarray*}%
Since the the roots of $\Delta (\lambda _{n})=0$ are the eigenvalues of the
problem (1)-(3), the following equation holds\newline
$\tan \left[ \lambda _{n}\pi +\theta -\beta \right] \times \left\{
1+B+o\left( \dfrac{e^{\left\vert \tau \right\vert \pi }}{\lambda _{n}}%
\right) \right\} =C+o\left( \dfrac{e^{\left\vert \tau \right\vert \pi }}{%
\lambda _{n}}\right) \medskip $\newline
where,

$B=\dfrac{b_{1}}{\lambda _{n}}\cos \theta +\dfrac{b_{2}}{\lambda _{n}}\sin
\theta +\dfrac{m}{\lambda _{n}}\cos (\beta -\theta )\cos (\theta +\beta
)\medskip $

$-\dfrac{K(\pi )}{2\lambda _{n}}+\dfrac{d_{1}}{\lambda _{n}}\cos \beta +%
\dfrac{d_{2}}{\lambda _{n}}\sin \beta ,\medskip $

$C=\dfrac{b_{1}}{\lambda _{n}}\sin \theta -\dfrac{b_{2}}{\lambda _{n}}\cos
\theta -\dfrac{m}{\lambda _{n}}\sin (\beta -\theta )\cos (\theta +\beta
)\medskip $

$+\dfrac{m^{2}\pi }{2\lambda _{n}}-\dfrac{L(\pi )}{2\lambda _{n}}-\dfrac{%
d_{1}}{\lambda _{n}}\sin \beta +\dfrac{d_{2}}{\lambda _{n}}\cos \beta
\medskip $\newline
which implies that,$\medskip $

$\tan \left[ \lambda _{n}\pi +\theta -\beta \right] =\left\{ 1-B+o\left( 
\dfrac{e^{\left\vert \tau \right\vert \pi }}{\lambda _{n}}\right) \right\}
\times \left\{ C+o\left( \dfrac{e^{\left\vert \tau \right\vert \pi }}{%
\lambda _{n}}\right) \right\} \medskip $\newline
for sufficiently large $\left\vert n\right\vert .$ It yields from the last
equation that%
\begin{equation}
\lambda _{n}=n+\frac{\beta -\theta +C}{\pi }+o\left( \frac{1}{n}\right)
\medskip
\end{equation}%
for $\left\vert n\right\vert \rightarrow \infty .$

\begin{lemma}
For sufficiently large $\left\vert n\right\vert $, the first component $%
\varphi _{1}(x,\lambda _{n})$ of the eigenfunction $\varphi (x,\lambda _{n})$
has exactly $n-1$ nodes $\left\{ x_{n}^{j}:j=\overline{0,n-2}\right\} $ in
the interval $\left( 0,\pi \right) $ i.e., \newline
$0<x_{n}^{0}<x_{n}^{1}<...<x_{n}^{n-2}<\pi $. The numbers $\left\{
x_{n}^{j}\right\} $ satisfy the following asymptotic formula:%
\begin{eqnarray*}
x_{n}^{j} &=&\frac{j\pi }{n}-\frac{j\pi }{n}\frac{\beta -\theta }{n\pi }+%
\frac{\nu (x_{n}^{j})-\theta }{n}-\left( \nu (x_{n}^{j})-\theta \right)
\left( \frac{\beta -\theta }{n^{2}\pi }\right) \medskip \\
&&+\frac{1}{2n^{2}}\left\{ 2b_{1}\sin \theta -2b_{2}\cos \theta +2m\cos
\theta \sin \theta +m^{2}x_{n}^{j}-L\left( x_{n}^{j}\right) \right\} \medskip
\\
&&+o\left( \frac{1}{n^{2}}\right)
\end{eqnarray*}

\begin{proof}
From (6), we can write%
\begin{eqnarray*}
&&\left. \varphi _{1}(x,\lambda _{n})=\lambda _{n}\sin \left[ \lambda
_{n}x-\nu (x)+\theta \right] +b_{1}\sin \left[ \lambda _{n}x-\nu (x)\right]
\right. \medskip \\
&&\left. +b_{2}\cos \left[ \lambda _{n}x-\nu (x)\right] +m\cos \theta \sin %
\left[ \lambda _{n}x-\nu (x)\right] \right. \medskip \\
&&\left. +\frac{b_{1}m}{\lambda _{n}}\sin \left[ \lambda _{n}x-\nu (x)\right]
-\frac{m^{2}x}{2}\cos \left[ \lambda _{n}x-\nu (x)+\theta \right] \right.
\medskip \\
&&\left. -\frac{b_{1}m^{2}x}{2\lambda _{n}}\cos \left[ \lambda _{n}x-\nu (x)%
\right] +\frac{b_{2}m^{2}x}{2\lambda _{n}}\sin \left[ \lambda _{n}x-\nu (x)%
\right] \right. \medskip \\
&&\left. -\frac{1}{2}\sin \left[ \lambda _{n}x-\nu (x)+\theta \right] K(x)-%
\frac{b_{1}}{2\lambda _{n}}\sin \left[ \lambda _{n}x-\nu (x)\right]
K(x)\right. \medskip \\
&&\left. -\frac{b_{2}}{2\lambda _{n}}\cos \left[ \lambda _{n}x-\nu (x)\right]
K(x)+\frac{1}{2}\cos \left[ \lambda _{n}x-\nu (x)+\theta \right] L(x)\right.
\medskip \\
&&\left. +\frac{b_{1}}{2\lambda _{n}}\cos \left[ \lambda _{n}x-\nu (x)\right]
L(x)-\frac{b_{2}}{2\lambda _{n}}\sin \left[ \lambda _{n}x-\nu (x)\right]
L(x)\right. \medskip \\
&&\left. +o\left( \frac{e^{\left\vert \tau \right\vert x}}{\lambda _{n}}%
\right) \right. \medskip
\end{eqnarray*}%
for sufficiently large $\left\vert n\right\vert .$ From $\varphi
_{1}(x_{n}^{j},\lambda _{n})=0,$ we get%
\begin{eqnarray*}
&&\lambda _{n}\sin \left[ \lambda _{n}x-\nu (x)+\theta \right] +b_{1}\sin %
\left[ \lambda _{n}x-\nu (x)+\theta \right] \cos \theta \medskip \\
&&-b_{1}\cos \left[ \lambda _{n}x-\nu (x)+\theta \right] \sin \theta
+b_{2}\cos \left[ \lambda _{n}x-\nu (x)+\theta \right] \cos \theta \medskip
\\
&&+b_{2}\sin \left[ \lambda _{n}x-\nu (x)+\theta \right] \sin \theta +m\cos
\theta \sin \left[ \lambda _{n}x-\nu (x)+\theta \right] \cos \theta \medskip
\\
&&-m\cos \theta \cos \left[ \lambda _{n}x-\nu (x)+\theta \right] \sin \theta
+\frac{mb_{1}}{\lambda _{n}}\sin \left[ \lambda _{n}x-\nu (x)+\theta \right]
\cos \theta \medskip \\
&&-\frac{mb_{1}}{\lambda _{n}}\cos \left[ \lambda _{n}x-\nu (x)+\theta %
\right] \sin \theta -\frac{m^{2}x}{2}\cos \left[ \lambda _{n}x-\nu
(x)+\theta \right] \medskip \\
&&-\dfrac{b_{1}m^{2}x}{2\lambda _{n}}\cos \left[ \lambda _{n}x-\nu
(x)+\theta \right] \cos \theta -\frac{b_{1}m^{2}x}{2\lambda _{n}}\sin \left[
\lambda _{n}x-\nu (x)+\theta \right] \sin \theta \medskip \\
&&+\frac{b_{2}m^{2}x}{2\lambda _{n}}\sin \left[ \lambda _{n}x-\nu (x)+\theta %
\right] \cos \theta -\frac{b_{2}m^{2}x}{2\lambda _{n}}\cos \left[ \lambda
_{n}x-\nu (x)+\theta \right] \sin \theta \medskip \\
&&-\frac{1}{2}K(x)\sin \left[ \lambda _{n}x-\nu (x)+\theta \right] -\frac{%
b_{1}}{2\lambda _{n}}K(x)\sin \left[ \lambda _{n}x-\nu (x)+\theta \right]
\cos \theta \medskip \\
&&-\frac{b_{1}}{2\lambda _{n}}K(x)\cos \left[ \lambda _{n}x-\nu (x)+\theta %
\right] \sin \theta -\frac{b_{2}}{2\lambda _{n}}K(x)\cos \left[ \lambda
_{n}x-\nu (x)+\theta \right] \cos \theta \medskip \\
&&-\frac{b_{2}}{2\lambda _{n}}K(x)\sin \left[ \lambda _{n}x-\nu (x)+\theta %
\right] \sin \theta +\frac{1}{2}L(x)\cos \left[ \lambda _{n}x-\nu (x)+\theta %
\right] \medskip \\
&&+\frac{b_{1}}{2\lambda _{n}}L(x)\cos \left[ \lambda _{n}x-\nu (x)+\theta %
\right] \cos \theta +\frac{b_{1}}{2\lambda _{n}}L(x)\sin \left[ \lambda
_{n}x-\nu (x)+\theta \right] \sin \theta \medskip \\
&&-\frac{b_{2}}{2\lambda _{n}}L(x)\sin \left[ \lambda _{n}x-\nu (x)+\theta %
\right] \cos \theta +\frac{b_{2}}{2\lambda _{n}}L(x)\cos \left[ \lambda
_{n}x-\nu (x)+\theta \right] \sin \theta \medskip \\
&&\left. +o\left( \frac{e^{\left\vert \tau \right\vert x}}{\lambda _{n}}%
\right) =0\right. \medskip
\end{eqnarray*}%
then we get%
\begin{eqnarray*}
&&\tan \left[ \lambda _{n}x-\nu (x)+\theta \right] +\frac{b_{1}}{\lambda _{n}%
}\tan \left[ \lambda _{n}x-\nu (x)+\theta \right] \cos \theta \medskip \\
&&-\frac{b_{1}}{\lambda _{n}}\sin \theta +\frac{b_{2}}{\lambda _{n}}\cos
\theta +\frac{b_{2}}{\lambda _{n}}\tan \left[ \lambda _{n}x-\nu (x)+\theta %
\right] \sin \theta \medskip \\
&&+\frac{m}{\lambda _{n}}\cos ^{2}\theta \tan \left[ \lambda _{n}x-\nu
(x)+\theta \right] -\frac{m}{\lambda _{n}}\cos \theta \sin \theta \medskip \\
&&-\frac{1}{2\lambda _{n}}K(x)\tan \left[ \lambda _{n}x-\nu (x)+\theta %
\right] -\frac{m^{2}x}{2\lambda _{n}}\medskip \\
&&\left. +\frac{1}{2\lambda _{n}}L(x)+o\left( \frac{1}{\lambda _{n}}\right)
=0\right. \medskip
\end{eqnarray*}%
which is equivalent to%
\begin{eqnarray*}
&&\tan \left[ \lambda _{n}x_{n}^{j}-\nu (x_{n}^{j})+\theta \right] = \\
&&\left\{ 1+\frac{2b_{1}\cos \theta +2b_{2}\sin \theta +2m\cos ^{2}\theta
-K\left( x_{n}^{j}\right) }{2\lambda _{n}}+o\left( \frac{1}{\lambda _{n}}%
\right) \right\} ^{-1}\times \medskip \\
&&\times \left\{ \frac{2b_{1}\sin \theta -2b_{2}\cos \theta +2m\cos \theta
\sin \theta +m^{2}x_{n}^{j}-L\left( x_{n}^{j}\right) }{2\lambda _{n}}%
+o\left( \frac{1}{\lambda _{n}}\right) \right\} \\
\medskip &=&\left\{ 1-\frac{2b_{1}\cos \theta +2b_{2}\sin \theta +2m\cos
^{2}\theta -K\left( x_{n}^{j}\right) }{2\lambda _{n}}+o\left( \frac{1}{%
\lambda _{n}}\right) \right\} \times \\
&&\times \left\{ \frac{2b_{1}\sin \theta -2b_{2}\cos \theta +2m\cos \theta
\sin \theta +m^{2}x_{n}^{j}-L\left( x_{n}^{j}\right) }{2\lambda _{n}}%
+o\left( \frac{1}{\lambda _{n}}\right) \right\} \\
&=&\frac{2b_{1}\sin \theta -2b_{2}\cos \theta +2m\cos \theta \sin \theta
+m^{2}x_{n}^{j}-L\left( x_{n}^{j}\right) }{2\lambda _{n}}+o\left( \frac{1}{%
\lambda _{n}}\right) \medskip
\end{eqnarray*}%
Taylor formula for the function arctangent yields%
\begin{eqnarray*}
&&\lambda _{n}x_{n}^{j}-\nu (x_{n}^{j})+\theta = \\
&&j\pi +\frac{1}{2\lambda _{n}}\left\{ 2b_{1}\sin \theta -2b_{2}\cos \theta
+2m\cos \theta \sin \theta +m^{2}x_{n}^{j}-L\left( x_{n}^{j}\right) \right\}
+o\left( \frac{1}{\lambda _{n}}\right) \medskip
\end{eqnarray*}%
which is equivalent to%
\begin{eqnarray*}
x_{n}^{j} &=&\lambda _{n}^{-1}\left\{ \nu (x_{n}^{j})-\theta +j\pi \right.
\medskip \\
&&\left. +\frac{1}{2\lambda _{n}}\left\{ 2b_{1}\sin \theta -2b_{2}\cos
\theta +2m\cos \theta \sin \theta +m^{2}x_{n}^{j}-L\left( x_{n}^{j}\right)
\right\} \right\} \medskip \\
&&+o\left( \frac{1}{\lambda _{n}^{2}}\right) \medskip
\end{eqnarray*}%
If we put%
\begin{equation*}
\lambda _{n}^{-1}=\frac{1}{n}\left\{ 1-\frac{\nu \left( \pi \right) -\theta
+\beta +C}{n\pi }+o\left( \frac{1}{n}\right) \right\} \medskip
\end{equation*}%
and considiring that $\nu \left( \pi \right) =0$ then we get 
\begin{eqnarray*}
&&x_{n}^{j}=\frac{j\pi }{n}+\frac{\left( \nu (x_{n}^{j})-\theta \right) }{n}-%
\frac{j\pi }{n}\frac{-\theta +\beta }{n\pi }+\left( \nu (x_{n}^{j})-\theta
\right) \left( \frac{-\theta +\beta }{n^{2}\pi }\right) \\
&&+\frac{1}{2n^{2}}\left\{ 2b_{1}\sin \theta -2b_{2}\cos \theta +2m\cos
\theta \sin \theta +m^{2}x_{n}^{j}-L\left( x_{n}^{j}\right) \right\}
+o\left( \frac{1}{n^{2}}\right) \medskip
\end{eqnarray*}
\end{proof}
\end{lemma}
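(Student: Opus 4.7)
The plan is to start from the asymptotic expansion of $\varphi_1(x,\lambda)$ given by Theorem 1, substitute $\lambda=\lambda_n$, and then analyze the zero set of the resulting function via a tangent equation. The leading term is $\lambda_n \sin[\theta+\lambda_n x-\nu(x)]$, which, for large $|n|$, dominates all other contributions because they are $O(1)$ or $o(e^{|\tau|x}/\lambda_n)$. This already forces the nodes $x_n^j$ to cluster near the zeros of $\sin[\theta+\lambda_n x-\nu(x)]$, which lie approximately at $x=(j\pi-\theta)/\lambda_n+\nu(x)/\lambda_n$ and whose count in $(0,\pi)$ is $n-1$ for sufficiently large $|n|$ (this is the standard counting argument, obtained from Rouché-type reasoning applied to the leading oscillation).

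Next, to extract the asymptotic formula, the plan is to rewrite every trigonometric factor appearing in $\varphi_1(x,\lambda_n)$ using the angle-addition identities so that each term becomes a linear combination of $\sin[\lambda_n x-\nu(x)+\theta]$ and $\cos[\lambda_n x-\nu(x)+\theta]$ with coefficients involving $\sin\theta$ and $\cos\theta$. Setting $\varphi_1(x_n^j,\lambda_n)=0$, dividing by $\lambda_n\cos[\lambda_n x_n^j-\nu(x_n^j)+\theta]$, and collecting coefficients by order in $\lambda_n^{-1}$ produces an equation of the form
\begin{equation*}
\tan[\lambda_n x_n^j-\nu(x_n^j)+\theta]\Big\{1+\tfrac{A(x_n^j)}{2\lambda_n}+o(\tfrac{1}{\lambda_n})\Big\} =\tfrac{D(x_n^j)}{2\lambda_n}+o(\tfrac{1}{\lambda_n}),
\end{equation*}
with $A(x)=2b_1\cos\theta+2b_2\sin\theta+2m\cos^2\theta-K(x)$ and $D(x)=2b_1\sin\theta-2b_2\cos\theta+2m\cos\theta\sin\theta+m^2x-L(x)$.

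After geometric-series inversion of the factor $\{1+A/(2\lambda_n)+o(1/\lambda_n)\}$, the right-hand side is already $O(1/\lambda_n)$, so the $A$-contribution only affects $o(1/\lambda_n)$ terms and drops out. The Taylor expansion of $\arctan$ around $j\pi$ then yields
\begin{equation*}
\lambda_n x_n^j-\nu(x_n^j)+\theta=j\pi+\tfrac{D(x_n^j)}{2\lambda_n}+o\bigl(\tfrac{1}{\lambda_n}\bigr).
\end{equation*}
Solving for $x_n^j$ gives $x_n^j=\lambda_n^{-1}\bigl\{j\pi+\nu(x_n^j)-\theta+D(x_n^j)/(2\lambda_n)\bigr\}+o(\lambda_n^{-2})$.

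The final step is to substitute the asymptotic for $\lambda_n^{-1}$ obtained from (9). Since $\nu(\pi)=0$ by the normalization $\int_0^\pi(p+r)dt=0$, one has $\lambda_n^{-1}=\tfrac{1}{n}\bigl\{1-\tfrac{\beta-\theta+C}{n\pi}+o(\tfrac{1}{n})\bigr\}$, and its square is $\tfrac{1}{n^2}+o(\tfrac{1}{n^2})$. Inserting these into the expression for $x_n^j$ and keeping terms up to $O(1/n^2)$ gives precisely the claimed formula. The main obstacle is the bookkeeping: one must carefully track which terms from the various products of $O(1)$, $O(1/\lambda_n)$, and $o(1/\lambda_n)$ factors contribute to the $O(1/n^2)$ level, verify that contributions involving $C$, $b_1\cos\theta$, $b_2\sin\theta$, $m\cos^2\theta$, and $K$ genuinely cancel out at this order, and confirm that the node count $n-1$ follows by comparing the total phase change of $\theta+\lambda_n x-\nu(x)$ across $[0,\pi]$ with the perturbation size.
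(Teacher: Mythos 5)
Your proposal is correct and follows essentially the same route as the paper: expand $\varphi_1(x,\lambda_n)$ via Theorem 1, use angle-addition to reduce to a tangent equation with exactly the coefficients $A(x)=2b_1\cos\theta+2b_2\sin\theta+2m\cos^2\theta-K(x)$ and $D(x)=2b_1\sin\theta-2b_2\cos\theta+2m\cos\theta\sin\theta+m^2x-L(x)$, observe that the $A$-term only enters at order $o(1/\lambda_n)$, apply the arctangent expansion, and substitute the eigenvalue asymptotics (9) with $\nu(\pi)=0$. The only difference is that you sketch a Rouch\'e-type justification for the node count $n-1$, which the paper asserts without an explicit argument.
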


Fix $x\in \left( 0,\pi \right) .$ Let $X$ be the set of nodal points. One
can choose a sequence $\left( x_{n}^{j}\right) \subset X$ such that $%
x_{n}^{j}$ converges to $x.$ Then the following limits are exist and finite:%
\begin{equation*}
f\left( x\right) :=\underset{\left\vert n\right\vert \rightarrow \infty }{%
\lim }\left( x_{n}^{j}-\frac{j\pi }{n}\right) n=-x\left( \frac{\beta -\theta 
}{\pi }\right) +\nu (x)-\theta \medskip
\end{equation*}%
where%
\begin{equation}
f\left( x\right) =-x\left( \frac{\beta -\theta }{\pi }\right) +\frac{1}{2}%
\dint\limits_{0}^{x}\left[ p\left( t\right) +r\left( t\right) \right]
dt-\theta
\end{equation}%
and%
\begin{equation*}
g\left( x\right) :=\underset{\left\vert n\right\vert \rightarrow \infty }{%
\lim }\left( x_{n}^{j}-\frac{j\pi }{n}+\frac{j\pi }{n}\frac{-\theta +\beta }{%
n\pi }-\frac{\nu (x)-\theta }{n}\right) n^{2}\medskip
\end{equation*}%
where

\begin{eqnarray}
g\left( x\right) &=&\left( \nu (x)-\theta \right) \left( \frac{\beta -\theta 
}{\pi }\right) \\
&&+b_{1}\sin \theta -b_{2}\cos \theta +m\cos \theta \sin \theta +\frac{m^{2}x%
}{2}-\frac{L\left( x\right) }{2}  \notag
\end{eqnarray}%
Now,we can formulate the following uniqueness theorem and establish a
constructive procedure for reconstructing the potantial of the considered
problem.

\begin{theorem}
The given dense subset of nodal set $X$ uniquely determines the potential $%
V(x)$ of the problem, the function $L^{\prime }(x)=\chi _{12}(x,x)-\chi
_{21}(x,x)$ of the partial imformation of the integral part, a.e. on $\left(
0,\pi \right) ,$ and the coefficients $\theta $ and $\beta $ of the boundary
conditions. Moreover, $V(x),$ $L^{\prime }(x),$ $\theta $ and $\beta $ can
be constructed by the following algorithm:

(1) fix $x\in (0,\pi ),$ choose a sequence $\left( x_{n}^{j(n)}\right)
\subset X$ such that $\underset{\left\vert n\right\vert \rightarrow \infty }{%
\lim }x_{n}^{j(n)}=x;$

(2)\textbf{\ }find the function $f(x)$ via (10) and calculate 
\begin{eqnarray*}
\theta &=&-f(0) \\
\beta &=&-f(\pi ) \\
V(x) &=&\medskip f^{\prime }(x)+\dfrac{\beta -\theta }{\pi }
\end{eqnarray*}%
(3) find the function $g(x)$ via (11) and calculate%
\begin{eqnarray*}
\medskip m &=&\sqrt{2\frac{g\left( \pi \right) -g\left( 0\right) }{\pi }} \\
L^{^{\prime }}\left( x\right) &=&-2g^{^{\prime }}\left( x\right) -2V(x)\frac{%
\beta -\theta }{\pi }+m^{2}
\end{eqnarray*}

\begin{example}
Let $\left\{ x_{n}^{j(n)}\right\} \subset X$ be the dense subset of nodal
points in $(0,\pi )$ given by the following asimptotics:\newline
\begin{eqnarray*}
x_{n}^{j(n)} &=&\frac{j(n)\pi }{n}+\frac{\left( j(n)\pi /n\right) ^{2}-\pi
j(n)\pi /n-\pi }{4n} \\
&&+\frac{1}{2n^{2}}\left\{ b_{1}\sqrt{2}-b_{2}\sqrt{2}+1+\frac{j(n)\pi }{n}-%
\frac{\pi }{2}\frac{j(n)\pi }{n}+\frac{\left( \frac{j(n)\pi }{n}\right) ^{2}%
}{2}\right\} \\
&&+o\left( \frac{1}{n^{2}}\right)
\end{eqnarray*}%
It can \ be calculated from (10) and (11) that,%
\begin{eqnarray*}
f\left( x\right) &=&\underset{n\rightarrow \infty }{\lim }\left(
x_{n}^{j(n)}-\frac{j(n)\pi }{n}\right) n\medskip \\
&=&\underset{n\rightarrow \infty }{\lim }\left\{ \frac{1}{4}\left( \frac{%
j(n)\pi }{n}\right) ^{2}-\frac{\pi }{4}\frac{j(n)\pi }{n}-\frac{\pi }{4}%
\right. \medskip \\
&&\left. +\frac{1}{2n}\left\{ b_{1}\sqrt{2}-b_{2}\sqrt{2}+1+\frac{j(n)\pi }{n%
}-\frac{\pi }{2}\frac{j(n)\pi }{n}+\frac{1}{2}\left( \frac{j(n)\pi }{n}%
\right) ^{2}\right\} \right. \medskip \\
&&\left. +o\left( \frac{1}{n^{2}}\right) \right\} \medskip \\
&=&\frac{1}{4}x^{2}-\frac{\pi }{4}x-\frac{\pi }{4}\medskip
\end{eqnarray*}%
\begin{eqnarray*}
g\left( x\right) &=&\underset{n\rightarrow \infty }{\lim }\left\{
x_{n}^{j(n)}-\frac{j(n)\pi }{n}-\frac{\frac{1}{4}\left( \dfrac{j(n)\pi }{n}%
\right) ^{2}-\dfrac{j(n)\pi ^{2}}{4n}-\frac{\pi }{4}}{n}\right\}
n^{2}\medskip \\
&=&\underset{n\rightarrow \infty }{\lim }\frac{1}{2}\left\{ b_{1}\sqrt{2}%
-b_{2}\sqrt{2}+1+\frac{j(n)\pi }{n}-\frac{\pi }{4}\frac{j(n)\pi }{n}+\frac{1%
}{4}\left( \frac{j(n)\pi }{n}\right) ^{2}+o\left( \frac{1}{n^{2}}\right)
\right\} \medskip \\
&=&\frac{\sqrt{2}}{2}b_{1}-\frac{\sqrt{2}}{2}b_{2}+\frac{1}{2}+\frac{x}{2}-%
\frac{\pi x}{4}+\frac{x^{2}}{4}\medskip
\end{eqnarray*}%
Therefore, it is obtained \ by using the algorithm in Therem 2,
\end{example}
\end{theorem}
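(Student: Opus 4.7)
The plan is to show that the algorithm correctly inverts the asymptotic for nodal points given by Lemma 1, extracting each unknown from a distinct order of decay in the nodal expansion.

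First, since $X$ is dense in $(0,\pi)$, for any fixed $x$ I may choose $x_n^{j(n)}\in X$ with $j(n)\pi/n\to x$, which forces $x_n^{j(n)}\to x$ as well. Substituting the Lemma 1 expansion into $(x_n^{j(n)}-j(n)\pi/n)\,n$ and taking the limit, the leading correction terms of order $1/n$ survive while everything of order $o(1/n)$ vanishes; continuity of $\nu$ gives $\nu(x_n^{j(n)})\to\nu(x)$, and $(j(n)\pi/n)(\beta-\theta)/\pi \to x(\beta-\theta)/\pi$. The limit is exactly the right-hand side of (10). Evaluating at $x=0$ yields $\theta=-f(0)$; evaluating at $x=\pi$ and using the standing assumption $\int_0^\pi(p+r)=0$ (so $\nu(\pi)=0$) yields $\beta=-f(\pi)$. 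Differentiation gives the potential via $V(x)=f'(x)+(\beta-\theta)/\pi$ almost everywhere, which completes the recovery of the differential part.

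Second, to recover the finer data I subtract the already-identified first-order content from $x_n^{j(n)}$ and magnify by $n^2$. The $O(1/n)$ terms in the Lemma 1 expansion cancel against the subtractions built into the definition of $g$ in (11), and the $O(1/n^2)$ piece of Lemma 1 then produces, in the limit, the explicit formula (11). Since $x$-independent contributions $b_1\sin\theta$, $-b_2\cos\theta$, $m\cos\theta\sin\theta$ and the $\nu$-term all cancel in $g(\pi)-g(0)$, the surviving quantity is essentially $m^2\pi/2$, from which $m$ is read off. Finally, differentiating (11) gives $g'(x)=V(x)(\beta-\theta)/\pi + m^2/2 - L'(x)/2$, which can be solved for $L'(x)=\chi_{12}(x,x)-\chi_{21}(x,x)$, the claimed partial information about the kernel $\chi$. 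Uniqueness is then immediate: two problems sharing a dense nodal set produce the same $f$ and $g$, hence the same $\theta,\beta,V,m,L'$.

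The main obstacle is the bookkeeping at the second order. One must confirm that after subtracting the first-order terms and multiplying by $n^2$, every quantity of order $o(1/n^2)$ from Lemma 1 still vanishes in the limit, and that the substitutions $\nu(x_n^{j(n)})\to\nu(x)$, $L(x_n^{j(n)})\to L(x)$, and $m^2 x_n^{j(n)}\to m^2 x$ are carried out with the correct remainders — in particular, the error $n^2(\nu(x_n^{j(n)})-\nu(x))/n$ must be controlled via the Lipschitz character of $\nu$ combined with the rate at which $j(n)\pi/n$ approaches $x$ within the chosen subsequence. The substitution
\[
\lambda_n^{-1} = \frac{1}{n}\left\{1 - \frac{\beta-\theta+C}{n\pi} + o\!\left(\frac{1}{n}\right)\right\}
\]
used at the end of the proof of Lemma 1 must also be tracked so that no $1/n^2$ contribution is lost when it multiplies the $O(1)$ bracket; this is routine but is the critical place where a careful estimate is required.
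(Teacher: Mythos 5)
Your proposal follows the same route as the paper: the paper gives no separate proof of Theorem 2 beyond the limits (10) and (11) extracted from the nodal asymptotics of Lemma 1, and your two-stage extraction (order $1/n$ for $\theta$, $\beta$, $V$; order $1/n^{2}$ for $m$ and $L'$) is exactly that argument, including your correct observation that the $o(1/n^{2})$ bookkeeping and the replacement $\nu(x_{n}^{j})\to\nu(x)$ inside the $n^{2}$-limit are the delicate points. One step, however, is asserted but does not hold as stated: from (11) one gets $g(\pi)-g(0)=\tfrac{m^{2}\pi}{2}-\tfrac{L(\pi)}{2}$, since the $\nu$-term and the $x$-independent constants cancel but $L(0)=0$ while $L(\pi)=\int_{0}^{\pi}\bigl(\chi_{12}(t,t)-\chi_{21}(t,t)\bigr)\,dt$ need not vanish; so the ``surviving quantity'' is not $m^{2}\pi/2$ unless one additionally assumes $L(\pi)=0$, and without that hypothesis the recovery of $m$ (and hence of $L'$) is incomplete. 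This gap is inherited from the paper, whose algorithm has the same defect (in the worked example $L(\pi)$ happens to vanish, which hides it). Note also that solving your correctly differentiated identity $g'(x)=V(x)\tfrac{\beta-\theta}{\pi}+\tfrac{m^{2}}{2}-\tfrac{L'(x)}{2}$ yields $L'(x)=-2g'(x)+2V(x)\tfrac{\beta-\theta}{\pi}+m^{2}$, with the opposite sign on the $V$-term from the formula printed in the theorem; your version is the one consistent with (11).
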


\begin{equation*}
\theta =-f\left( 0\right) =\frac{\pi }{4}\medskip
\end{equation*}%
\begin{equation*}
\beta =-f\left( \pi \right) =\frac{\pi }{4}\medskip
\end{equation*}%
\begin{eqnarray*}
v\left( x\right) &=&f^{\prime }\left( x\right) -\frac{f\left( \pi \right)
-f\left( 0\right) }{\pi } \\
&=&\frac{x}{2}-\frac{\pi }{4}\medskip
\end{eqnarray*}%
\begin{equation*}
m=\sqrt{2\frac{g\left( \pi \right) -g\left( 0\right) }{\pi }}=1\medskip
\end{equation*}%
\begin{equation*}
L^{\prime }(x)=\chi _{12}(x,x)-\chi _{21}(x,x)=-2g^{^{\prime }}\left(
x\right) -2V(x)\frac{\beta -\theta }{\pi }+m^{2}=\frac{\pi }{2}-x\medskip
\end{equation*}

\end{document}